%
\documentclass[12pt]{amsart}
\usepackage{amsmath,amsfonts,url}

\newcommand{\nexteq}{\displaybreak[0]\\ &=}
\newcommand{\N}{\mathbb{N}}
\newcommand{\bR}{\bar{R}}
\newcommand{\cF}{\mathcal{F}}
\newcommand{\cG}{\mathcal{G}}
\newcommand{\cR}{\mathcal{R}}

\newtheorem{lemma}{Lemma}
\newtheorem{theorem}[lemma]{Theorem}

\begin{document}

\title{Complementary Ramsey numbers and Ramsey graphs}
\author{Akihiro Munemasa
\and Masashi Shinohara}
\address{Tohoku University, Sendai, 980-8579, Japan}
\email{munemasa@math.is.tohoku.ac.jp}
\address{Shiga University, Shiga, 520-0862, Japan}
\email{shino@edu.shiga-u.ac.jp}
\maketitle

\begin{abstract}
In this paper, we consider a variant of Ramsey
numbers which we call complementary Ramsey numbers
$\bR(m,t,s)$.
We first establish their connections to pairs of Ramsey $(s,t)$-graphs.
Using the classification of Ramsey $(s,t)$-graphs for small $s,t$,
we determine the complementary
Ramsey numbers $\bR(m,t,s)$ for $(s,t)=(4,4)$ and $(3,6)$.
\end{abstract}

\section{Introduction}\label{sec:1}
For any given positive integers \(n_1,\ldots,n_c\), 
there is an integer, \(\bR\), such that if the edges of a 
complete graph of order at least \(\bR\) are colored with 
\(c\) different colors, then for some \(i\) between \(1\) and \(c\), 
there exists a complete subgraph of order \(n_i\) 
all of whose edges have colors different from \(i\).
The smallest such integer \(\bR\) is denoted by
\(\bR(n_1,\ldots,n_c)\).
Note that \(\bR(n_1,n_2)=R(n_2,n_1)\), an ordinary Ramsey number.
These numbers 
can be traced back (at least) to a paper of Erd\H{o}s, 
Hajnal and Rado \cite{EHS1965}. 
Erd\H{o}s and Szemeredi \cite{ErdosSzemeredi:1972}
proved that the diagonal complementary Ramsey number 
$\bR(n,\dots,n)$ (with $c$ colors), is at most $c^{C n/c}$, 
where $C$ is some constant.

Related concepts were considered later.
Chung and Liu \cite{MR523059} considered more general Ramsey numbers,
and the complementary Ramsey number
$\bR(m_1,\dots,m_t)$ is denoted by $R_{t-1}^t(K_{m_1},\dots,K_{m_t})$
in their notation.
Harborth and M\"oller \cite{MR1708843} also considered
what they call weakened Ramsey numbers $R_{s,t}(G)$. 
The relationship is
\[R_{t-1,t}(K_m)=\bR(\underbrace{m,\dots,m}_t).\]
Xu, Shao, Su, and Li \cite{GC2009}
considered multigraph Ramsey numbers, which are also regarded as 
more general than complementary Ramsey numbers. In fact,
the complementary Ramsey number
$\bR(m_1,\dots,m_t)$ is denoted by $f^{(t-1)}(m_1,\dots,m_t)$
in their notation.

For some small values of $m_1,m_2,m_3$, the complementary Ramsey numbers
$R(m_1,m_2,m_3)$ have been determined. We summarize these results in
Table~\ref{tab3}.
Note that we may assume without loss of generality, $m_1\geq m_2\geq m_3$.
The numbers $R(m,3,3)$, $R(m,4,3)$, and $R(m,5,3)$ have been
determined by 
Theorems~3.3, 3.4, and 3.5, respectively, in \cite{MR523059}.
Moreover, 
$\bR(4,4,4)=10$
by \cite[Theorem~3.6]{MR523059}.

\begin{table}
\begin{center}
\begin{tabular}{|c|cccccccc|}
\hline
$m$& $3$ & $4$ & $5$ & $6$ & $7$ & $8$ & $9$--$13$ & $14$-- \\
\hline 
$\bR(m,3,3)$ & {$5$} & $5$ & $5$ & {$6$} & $\cdots$ & $\cdots$ &$\cdots$ & $\cdots$  \\
\hline 
$\bR(m,4,3)$ & - & $7$ & $8$ & $8$ &  {$9$} & $\cdots$ & $\cdots$ & $\cdots$ \\
\hline
$\bR(m,5,3)$ & - & - & $9$ & $11$ &  {$12$} & $12$ & $13$ & $14$ \\
\hline
\end{tabular}
\caption{$\bR(m,m_2,3)$ for $m_2=3,4,5$}
\label{tab3}
\end{center}
\end{table}

The purpose of this paper is to determine
$\bR(m,4,4)$ and $\bR(m,6,3)$ for all positive integers $m$,
using the classification of Ramsey $(4,4)$-graphs and Ramsey
$(3,6)$-graphs, respectively, available from \cite{McK}.
Our results are tabulated in Tables~\ref{tab2x} and \ref{tab2y}.
The computation needed to verify the entries of these tables
was done with the help of Magma \cite{Magma}.

\begin{table}
\begin{center}
\begin{tabular}{|c|ccccccc|}
\hline
$m$& $4$ & $5$--$6$ & $7$ & $8$--$10$ & $11$--$16$ & $17$ & $18$--  \\
\hline 
$\bR(m,4,4)$ & $10$ & $13$ & $14$ & $15$ & $16$ & $17$ & $18$ \\
\hline
\end{tabular}
\caption{$\bR(m,4,4)$}
\label{tab2x}
\end{center}
\end{table}

\begin{table}
\begin{center}
\begin{tabular}{|c|ccccc|}
\hline
$m$& $6$ & $7$ & $8$ & $9$--$15$ & $16$--  \\
\hline 
$\bR(m,6,3)$ & $13$ & $14$ & $16$ & $17$ &$18$ \\
\hline
\end{tabular}
\caption{$\bR(m,6,3)$}
\label{tab2y}
\end{center}
\end{table}

This paper is organized as follows. In Sect.~\ref{sec:2},
we give definitions and derive immediate consequences. 
In Sect.~\ref{sec:rst}, we show how to determine the complementary
Ramsey number $\bR(m,t,s)$ from the knowledge of Ramsey $(s,t)$-graphs.
In Sect.~\ref{sec:44}, 
we determine
$\bR(m,4,4)$ and 
in Sect.~\ref{sec:36}, 
we determine
$\bR(m,6,3)$, for all positive integers $m$.
We end the article with concluding remarks 
as Sect.~\ref{sec:c}.

\section{Definitions and notation}\label{sec:2}
For a positive integer $n$, we denote the set $\{1,\dots,n\}$ by $[n]$,
and the set of all $n$-element subsets of a set $X$ by
$\binom{X}{n}$.
For positive integers $k$ and $n$, we denote the set of all edge-coloring
of the complete graph $K_n$ by $k$ colors, by $C(n,k)$:
\[
C(n,k)=\{f\mid f:\binom{[n]}{2}\to[k]\}.
\]
If $G$ is a graph, then we denote by $\alpha(G)$ the independence
number of $G$, and by $\omega(G)$ the clique number of $G$.
We identify a graph whose vertex set is $[n]$, with its set
of edges. In particular, for $f\in C(n,k)$ and $i\in[k]$,
$f^{-1}(i)$ is regarded as the graph $([n],f^{-1}(i))$. 
We use the abbreviations
$\alpha_i(f)=\alpha(f^{-1}(i))$,
$\omega_i(f)=\omega(f^{-1}(i))$.
The Ramsey number can be defined as follows:
\[
R(m_1,\dots,m_k)=\min\{n\in\N\mid
\forall f\in C(n,k),\;\exists i\in[k],\;\omega_i(f)\geq m_i\},
\]
where $m_1,\dots,m_k$ are positive integers.
The complementary Ramsey number is defined by replacing $\omega$
by $\alpha$ in the above definition of the Ramsey number:
\[
\bR(m_1,\dots,m_k)=\min\{n\in\N\mid
\forall f\in C(n,k),\;\exists i\in[k],\;\alpha_i(f)\geq m_i\}.
\]
The following properties are immediate from the
definition. In the following, $m_1,m_2,\dots$, denote positive integers.
\begin{align}
R(m_1,m_2)&=\bR(m_2,m_1),\notag\\
\bR(m_1,\dots,m_k)&=\bR(m_{\sigma(1)},\dots,m_{\sigma(k)})\label{e1}\\
&\qquad\text{for any permutation $\sigma$ on $[k]$,}\notag\\
\bR(m_1,\dots,m_k,2)&=\min\{m_1,\dots,m_k\}.\label{e4}
\end{align}
When considering $\bR(m_1,m_2,m_3)$, we may assume
$m_1\geq m_2\geq m_3\geq3$ without loss of generality, by
\eqref{e1} and \eqref{e4}.
We shall use an upper bound given in \cite[Theorem~2.1]{MR523059}:
\begin{equation}\label{2.1}
\bR(m_1,m_2,m_3)\leq R(m_2,m_3).
\end{equation}

\section{Some inequalities}
In this section, we establish some inequalities among complementary
Ramsey numbers. These inequalities will not be needed for our
main results, but it will be used to derive bounds for
$\bar{R}(5,5,5)$ in the final section.

Let $n$ and $k$ be positive integers. For $f\in C(n,k)$ and $x\in [n]$, set
\[
f_i(x)=|\{y\in[n]\mid f(\{x,y\})=i\}|.
\]

\begin{lemma}\label{lem:7}
Let $n,k,m_1,\dots,m_k$ and $t$ be positive integers with $1\leq t\leq k$, and
let $f\in C(n,k)$. If 
\[
\sum_{j=1}^t f_j(x)\geq 
\bar{R}(m_1,m_2,\ldots ,m_t , m_{t+1}-1,\ldots ,m_k-1) 
\]
for some $x\in [n]$, then
$\alpha_i(f)\geq m_i$ for some $i\in [k]$.
\end{lemma}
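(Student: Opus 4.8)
The plan is to localize to the neighborhood of the vertex $x$ furnished by the hypothesis. First I would fix such an $x$ and put $N=\{y\in[n]\mid f(\{x,y\})\in[t]\}$, the set of vertices joined to $x$ by an edge colored with one of the first $t$ colors; then $|N|=\sum_{j=1}^t f_j(x)$, so the hypothesis becomes $|N|\geq\bR(m_1,\dots,m_t,m_{t+1}-1,\dots,m_k-1)$. Restricting $f$ to $\binom{N}{2}$ yields a coloring $g\in C(|N|,k)$, and since the defining property of $\bR$ is monotone in the order (an independent set persists when vertices are added), the bound on $|N|$ lets me invoke the definition directly: there is an $i\in[k]$ with $\alpha_i(g)\geq m_i$ if $i\leq t$, and $\alpha_i(g)\geq m_i-1$ if $i>t$.

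From here the argument splits into two cases. If $i\leq t$, then any independent set witnessing $\alpha_i(g)\geq m_i$ is an independent set of the same size in $f^{-1}(i)$, because $g$ is a restriction of $f$; hence $\alpha_i(f)\geq m_i$. If $i>t$, I instead have an independent set $S\subseteq N$ in $g^{-1}(i)$ with $|S|\geq m_i-1$, and I would enlarge it by adjoining $x$.

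The crux is the bookkeeping behind the $m_i-1$ shift for $i>t$, and this is the step I would treat most carefully. Every edge from $x$ to a vertex of $N$ is colored by some color in $[t]$, hence not by $i$ since $i>t$; moreover $x\notin N$. Consequently no edge joining $x$ to $S$ lies in $f^{-1}(i)$, and as $S$ is already independent in $f^{-1}(i)$, the set $S\cup\{x\}$ is independent in $f^{-1}(i)$ with $|S\cup\{x\}|\geq m_i$, giving $\alpha_i(f)\geq m_i$. In either case $\alpha_i(f)\geq m_i$ for some $i$, as required. I do not expect a real difficulty beyond matching the $k$ thresholds in $\bR$ to the two cases and checking that adjoining $x$ genuinely adds one vertex.
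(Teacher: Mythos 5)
Your proof is correct and follows essentially the same route as the paper's: restrict $f$ to the set $N$ of neighbors of $x$ in the first $t$ colors, apply the definition of $\bR$ to the restricted coloring, and in the case $i>t$ adjoin $x$ to the independent set, which is legitimate because all edges from $x$ into $N$ carry colors in $[t]$. The only (harmless) extra care you take is in justifying the monotonicity of the defining property of $\bR$ in $n$, which the paper leaves implicit.
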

\begin{proof}
Set
$Y=\{y\in[n]\mid f(\{x,y\})\in[t]\}$,
so that
\[
|Y|=\sum_{j=1}^t f_j(x).
\]
Let $g=f|_{\binom{Y}{2}}$. 
By the assumption, either there exists $i$ with $1\leq i\leq t$
such that $\alpha_i(g)\geq m_i$, or there exists $i$ with
$t<i\leq k$ such that $\alpha_i(g)\geq m_i-1$.

If $1\leq i \leq t$, then $\alpha_i(f)\geq \alpha _i(g)\geq m_i$. 
If $t<i\leq k$, then there exists an independent set
$Z$ in $g^{-1}(i)$ with $|Z|=m_i-1$.
Then $Z\cup \{x\}$ is an independent set in $f^{-1}(i)$. 
This implies $\alpha_i(f)\geq m_i$. 
\end{proof}

\begin{lemma}\label{lem:8}
Let $k,m_1,\dots,m_k$ and $t$ be positive integers. 
Let $[k]=\bigcup_{j=1}^t M_j$ be a nontrivial partition.
For each $i\in[k]$ and $j\in[t]$, define
\[
m_i^{(j)}=\begin{cases}
m_i&\text{if $i \in M_j$,}\\
m_i-1&\text{otherwise.}
\end{cases}
\]
Then
\[
\bR(m_1, \ldots, m_k)\leq 
\sum_{j=1}^t\bR(m_1^{(j)},\dots,m_k^{(j)})-t+2.
\]
In particular,
\begin{equation}\label{m123}
\bR(m_1,m_2,m_3)\leq\bR(m_1,m_2,m_3-1)+\bR(m_1-1,m_2-1,m_3).
\end{equation}
\end{lemma}
\begin{proof}
Let $n$ denote the right-hand side of the inequality. 
If $f\in C(n,k)$ and $x\in [n]$, then
\begin{align*} 
\sum_{j=1}^t \sum_{i\in M_j} f_i(x)
&=\sum_{i=1}^k f_i(x)
\nexteq
n-1
\nexteq
\sum_{j=1}^t\bR(m_1^{(j)},\dots,m_k^{(j)})-t+1.
\\&>
\sum_{j=1}^t(\bR(m_1^{(j)},\dots,m_k^{(j)})-1).
\end{align*}
Thus, there exists $j\in[t]$ such that
\[
\sum_{i\in M_j} f_i(x)
\geq\bR(m_1^{(j)},\dots,m_k^{(j)}).
\]
By Lemma~\ref{lem:7}, there exists $i\in[k]$ such that
$\alpha_i(f)\geq m_i$. This implies
$\bar{R}(m_1,\dots,m_k)\leq n$.
\end{proof}

\section{Ramsey $(s,t)$-graphs}\label{sec:rst}

A graph $G$ is said to be a Ramsey $(s,t)$-graph if
$\omega(G)<s$ and $\alpha(G)<t$. 
We denote by $\cR_n(s,t)$
the set of Ramsey $(s,t)$-graphs on the vertex-set $[n]$.
Database of Ramsey graphs can be found in \cite{McK}.

We write $G\supseteq H$ if $H$ is a subgraph of $G$. 
Recall that, since we identify a graph with its set of edges, this means
that $G$ and $H$ have the same set of vertices, and the set of edges
of $H$ is a subset of that of $G$.
For a graph $G$ and its subgraph $H$, 
we denote by $G-H$ whose edge set consists of edges of $G$ which are
not an edge of $H$.


\begin{lemma}\label{lem:Rst}
Let $m_1,m_2,m_3$ be positive integers greater than $2$.
For a positive integer $n$, define 
\begin{align*}
a_n(m_3,m_2)&=\min\{\alpha(G-H)\mid G,H\in\cR_n(m_3,m_2),\; G\supseteq H\}.
\end{align*}
Then
\[\bR(m_1,m_2,m_3)=1+\max\{n\in\N\mid a_n(m_3,m_2)<m_1\}.\]
\end{lemma}
\begin{proof}
Define 
\begin{align*}
\cF_n&=\{f\mid f:\binom{[n]}{2}\to[3],\;\alpha(f^{-1}(i))<m_i\;(\forall i\in[3])\},\\
\cG_n&=\{(G,H)\mid G,H\in\cR_n(m_3,m_2),\;G\supseteq H,\; \alpha(G- H)<m_1\}.
\end{align*}
Then there is a bijection $\Phi:\cF_n\to\cG_n$ defined by
\[\Phi:f\mapsto(f^{-1}(\{1,2\}),f^{-1}(2))\quad(f\in\cF_n).\]
Indeed, for $f\in\cF_n$, write $G=f^{-1}(\{1,2\})$, $H=f^{-1}(2)$.
Then
\begin{align*}
m_1>\alpha(f^{-1}(1))&=\alpha(G- H),\\
m_2>\alpha(f^{-1}(2))&=\alpha(H),\\
m_3>\alpha(f^{-1}(3))&=\omega(G).
\end{align*}
Since $G\supseteq H$, we have $\omega(H)\leq \omega(G)$ and 
$\alpha(G)\leq\alpha(H)$. Thus, $G,H\in\cR_n(m_3,m_2)$, and 
$\Phi$ is well defined. It is clear that $\Phi$ is a bijection.
Now
\begin{align*}
\bR(m_1,m_2,m_3)-1&=
\max\{n\in\N\mid \cF_n\neq\emptyset\}
\nexteq
\max\{n\in\N\mid \cG_n\neq\emptyset\}
\nexteq
\max\{n\in\N\mid a_n(m_3,m_2)<m_1\}.
\end{align*}
\end{proof}


\section{The complementary Ramsey numbers $\bR(m,4,4)$}\label{sec:44}

Greenwood and Gleason \cite{GG} proved $R(4,4)=18$, which implies 
$\bR(m,4,4)\leq18$ for all $m\in\N$ by \eqref{2.1}.
The list of all Ramsey $(4,4)$-graphs can be found in \cite{McK}. 
For example,
$\cR_{17}(4,4)$ consists of a single graph,
while $\cR_{16}(4,4)$ consists of two graphs with the same size.
Thus $a_{n}(4,4)=n$ for $n=16,17$.
For $9\leq n\leq15$, we can use computer to determine
$a_n(4,4)$. Let us briefly describe how to perform this computation.
Observe that 
\begin{align*}
a_n(4,4)&=\min\{\alpha(G-H)\mid G,H\in\cR_n(4,4),\;G\supseteq H,\\
&\qquad\qquad G\text{ is maximal in }\cR_n(4,4)\}.
\end{align*}
Note that, by the same reason, we may further assume that $H$ is a minimal
Ramsey $(4,4)$-graph on $n$ vertices in the above definition of $a_n(4,4)$. 
However, enumerating all
containment relations between maximal and minimal 
Ramsey $(4,4)$-graphs on $n$ vertices could be difficult.
So we use a different approach. Fix a maximal 
Ramsey $(4,4)$-graph $G$ on $n$ vertices, and we try to find
a subgraph $H$ of $G$ such that $H\in\cR_n(4,4)$ and
$\alpha(G-H)\leq m$. This will guarantee
$a_n(4,4)\leq m$. 

If an edge $e\in G$ satisfies
$\alpha(G-e)\geq4$, then we must have $e\in H$. Indeed,
if $e\notin H$, then $G-e\supseteq H$, so $\alpha(G-e)\leq\alpha(H)\leq3$.
This is a contradiction.

If another edge $e'\in G$ satisfies $\alpha(G-e-e')>m$, then we must have $e'\notin H$.
Indeed, if $e'\in H$, then $G-e-e'\supseteq G-H$, so
$\alpha(G-e-e')\leq\alpha(G-H)\leq m$. This is a contradiction.

These criteria reduces the number of edges $e''$ for which
we need to determine whether $e''\in H$ or $e''\notin H$.
For such edges, we need to consider both possibilities, and eventually,
either we have a desired subgraph $H$, or the conclusion that $G$ has no
such subgraph $H$.

Even though the number of Ramsey $(4,4)$-graphs is quite large, 
the number of maximal Ramsey $(4,4)$-graphs of a given number of vertices
is small. We can quickly perform the above process for each of 
maximal Ramsey $(4,4)$-graphs to decide the truth of the inequality
$a_n(4,4)\leq m$. This leads to the determination of the values
$a_n(4,4)$ for all $n\in\{9,10,\dots,15\}$.
Our results are given in Table~\ref{tab2a}.

\begin{table}[h]
\begin{center}
\begin{tabular}{|c|cccccccc|}
\hline
$n$&        $9$ & $10$--$12$&$13$ &$14$ &$15$&$16$ &$17$ & $18$-- \\
\hline 
$a_n(4,4)$ & $3$ & $4$  & $6$ & $7$ & $10$ & $16$ & $17$ & $\infty$\\
\hline
\end{tabular}
\caption{$a_n(4,4)$}
\label{tab2a}
\end{center}
\end{table}

\begin{theorem}\label{thm:2x}
We have
\[\bR(m,4,4)=\begin{cases}
10&\text{if $m=4$,}\\
13&\text{if $m=5,6$,}\\
14&\text{if $m=7$,}\\
15&\text{if $8\leq m\leq10$,}\\
16&\text{if $11\leq m\leq16$,}\\
17&\text{if $m=17$,}\\
18&\text{if $m\geq18$.}
\end{cases}\]
\end{theorem}
\begin{proof}
Immediate from Table~\ref{tab2a} and Lemma~\ref{lem:Rst}.
\end{proof}

\section{The complementary Ramsey numbers $\bR(m,6,3)$}\label{sec:36}

K\'ery \cite{Kery} proved $R(3,6)=18$ (see also \cite{Car}), which implies 
$\bR(m,6,3)\leq18$ for all $m\in\N$ by \eqref{2.1}.
The list of all Ramsey $(3,6)$-graphs can be found in \cite{McK}. 
For $12\leq n\leq17$, we can use computer to determine
$a_n(3,6)$, exactly in the same manner as described in
Sect.~\ref{sec:44}. Our results are given in Table~\ref{tab2b}.

\begin{table}[h]
\begin{center}
\begin{tabular}{|c|cccccc|}
\hline
$n$&         
$12$ & $13$ & $14$--$15$ & $16$ & $17$ & $18$--\\
\hline 
$a_n(3,6)$ & 
$5$ &    $6$   & $7$    & $8$ &  $15$  &$\infty$\\
\hline
\end{tabular}
\caption{$a_n(3,6)$}
\label{tab2b}
\end{center}
\end{table}

\begin{theorem}\label{thm:2y}
We have
\[\bR(m,6,3)=\begin{cases}
13&\text{if $m=6$,}\\
14&\text{if $m=7$,}\\
16&\text{if $m=8$,}\\
17&\text{if $9\leq m\leq15$,}\\
18&\text{if $m\geq16$.}
\end{cases}\]
\end{theorem}
\begin{proof}
Immediate from Table~\ref{tab2b} and Lemma~\ref{lem:Rst}.
\end{proof}


\section{Concluding remarks}\label{sec:c}
If we could show $\bR(5,5,5)=12$, then
this would have contributed to the proof of a conjecture of Einhorn--Schoenberg
\cite{Shino}. This was our motivation to study complementary Ramsey numbers. 
However, Theorem~2.6 in \cite{MR523059} already gives
$\bR(5,5,5)\geq17$, which is improved in \cite[Table 2]{GC2009} as
$\bR(5,5,5)\geq20$.

As for an upper bound, observe first, by \eqref{m123}
and Tables~\ref{tab3}, \ref{tab2x}, we have
\begin{align*}
\bR(5,5,4)&\leq
\bR(5,5,3)+\bR(4,4,4)
\nexteq
19.
\end{align*}
Thus, again by \eqref{m123} and Table~\ref{tab2x},
we have
\begin{align*}
\bR(5,5,5)&\leq \bR(5,4,4)+\bR(4,5,5)
\\&\leq
32.
\end{align*}


\subsection*{Acknowledgements}
We would like to thank David Conlon and Xiao\-dong Xu
for bringing earlier literature to our attention.
The work of A.~M.\ was supported in part
by JSPS Open Partnership Joint Research Project 
``Extremal graph theory, algebraic graph theory and mathematical 
approach to network science'' (2017--18).
The work of M.~S.\ was supported by JSPS KAKENHI Grant Number 18K03396.

\end{document}